\documentclass[12pt]{amsart}
\usepackage{amsmath}
\usepackage{amsxtra}
\usepackage{amscd}
\usepackage{amsthm}
\usepackage{amsfonts}
\usepackage{amssymb}
\usepackage{mathrsfs}
\usepackage[all]{xy}
\usepackage[pdftex]{color,graphicx}
\usepackage{graphicx}
\usepackage{color}
\usepackage{mathrsfs}

\newtheorem{thm}{Theorem}
\newtheorem{lemma}[thm]{Lemma}
\newtheorem{prop}[thm]{Proposition}

\newtheorem{corol}[thm]{Corollary}

\newtheorem*{thmA}{Theorem}

\def\Z{\Bbb Z}

\def\Q{\Bbb Q}

\def\R{\Bbb R}

\def\Hy{\mathcal{H}}

\DeclareMathOperator\aut{Aut}

\DeclareMathOperator\Mod{mod}

\DeclareMathOperator\dist{dist}
\DeclareMathOperator\Orb{\mathcal{O}}


\linespread{1.4} \textwidth = 15cm \textheight = 600pt
\makeatletter
\newcommand{\rmnum}[1]{\romannumeral #1}
\newcommand{\Rmnum}[1]{\expandafter\@slowromancap\romannumeral #1@}
\makeatother

\begin{document}

\begin{abstract}  Let $\mathrm{O}(f,\Z)$ be the integral orthogonal group of an integral quadratic form $f$ of signature $(n,1)$.
Let $\mathrm{R}(f,\Z)$ be the subgroup of $\mathrm{O}(f,\Z)$ generated by all hyperbolic reflections. 
Vinberg 
\cite{Vi1,Vi3} proved that if $n \ge 30$ then the reflective quotient
$\mathrm{O}(f,\Z)/\mathrm{R}(f,\Z)$ is infinite.  In this note we generalize Vinberg's theorem 
and prove that if $n \ge 92$ then  $\mathrm{O}(f,\Z)/\mathrm{R}(f,\Z)$ contains a non-abelian free group (and thus it is not amenable).

\end{abstract}

\title{On the non-amenability of the reflective quotient \Rmnum{1}: The rational case}

\keywords{}

\author{Chen Meiri}
\address{Department of Mathematics,
University of Chicago,
5734 S. University Avenue,
Chicago, Illinois, USA, 60637}
\email{chenmeiri@math.uchicago.edu}
\maketitle
\date{\today}

\section{Introduction} 

Let $f$ be an integral quadratic form of signature $(n,1)$. The integral orthogonal group $\mathrm{O}(f,\Z)$
has an index two subgroup $\mathrm{O}^+(f,\Z)$ which is a discrete group of motions of the $n$-dimensional hyperbolic space. The reflection subgroup $\mathrm{R}(f,\Z)$ is the subgroup
generated by all hyperbolic reflections in $\mathrm{O}^+(f,\Z)$. Since a conjugate of 
a hyperbolic reflection is also a hyperbolic reflection, $\mathrm{R}(f,\Z)$ is a normal subgroup of
of $\mathrm{O}(f,\Z)$. The reflection subgroup $\mathrm{R}(f,\Z)$ and reflective quotient $\mathrm{O}(f,\Z)/\mathrm{R}(f,\Z)$ play an important role in many geometric situations (see \cite{Do} and the reference therein). 
However, the exact structure of $\mathrm{O}(f,\Z)/\mathrm{R}(f,\Z)$ 
is known only for some quadratic forms in relatively small dimensions. 
Let $f_n$ be the unique odd unimodular quadratic form of signature $(n,1)$.
Vinberg \cite{Vi2} found the structure of $\mathrm{O}(f_n,\Z)$ and $\mathrm{O}(f_n,\Z)/\mathrm{R}(f_n,\Z)$  for 
for $n \le 17$. This result was extended by Vinberg and Kaplinskaya to $n \le 19$. In all these cases the reflective quotient is finite. 
Conway \cite{Co} proved that if $q $ is the unique even unimodular quadratic form of signature 
$(25,1)$ then  $\mathrm{O}^+(q,\Z)/\mathrm{R}(q,\Z)$ is isomorphic to the isometry group of the Leech lattice.
Conway's result is an example where the reflective quotient is infinite but amenable.  
Borcherds \cite{Bo2} found the structure of $\mathrm{O}(f_n,\Z)/\mathrm{R}(f_n,\Z)$ for $20 \le n \le 24$, in these case the reflective quotient
is infinite and isomorphic to a direct limit of finite groups. There are several other works which deals with other quadratic form of relatively small dimensions. 

When $n$ is large, the exact structure of the reflective quotient is not known and 
the results are more qualitative. Vinberg's theorem \cite{Vi1,Vi2} implies that  if $n \ge 30$ then the reflective quotient
$\mathrm{O}(f,\Z)/\mathrm{R}(f,\Z)$ is infinite for any $f$. A stronger theorem with respect to odd unimodular quadratic forms $f_n$
was proven by Borcherds \cite{Bo2}; When $n\ge 25$ is congruent to 2,3 or 6 modulo 8 then the reflective quotient $\mathrm{O}(f_n,\Z)/\mathrm{O}(f_n,\Z)$ is a non-trivial amalgamated free product (not necessarily of finite groups). The main theorem of this note is:

\begin{thm}\label{thm main} Let $L \subseteq \Q^{n+1}$ be a lattice and let $f$ be a quadratic form of signature $(n,1)$ which takes integral values on $L$. If $n \ge 92$ then the reflective quotient $\mathrm{O}(f,L)/\mathrm{R}(f,L)$ 
contains a non-abelian free subgroup. 
\end{thm}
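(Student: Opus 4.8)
The plan is to reduce to Vinberg's theorem and then to promote the ``infinite quotient'' conclusion to a ``contains a free group'' conclusion by a commensurability argument together with the Tits alternative. First I would observe that it suffices to prove the theorem for a single quadratic form in each commensurability class of lattices: if $L' \subseteq L$ is a finite-index sublattice on which $f$ is also integral, then $\mathrm{O}(f,L')$ is commensurable with $\mathrm{O}(f,L)$, and a hyperbolic reflection in one group has a power (in fact the reflection itself, after passing to a common finite-index subgroup fixing the relevant hyperplane setwise) lying in the other, so the reflection subgroups are commensurable as well; hence the reflective quotients are commensurable up to finite kernels and cokernels, and containing a non-abelian free subgroup is invariant under this relation. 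This lets me replace an arbitrary $(L,f)$ by a convenient integral lattice, e.g. one containing a large-rank unimodular (or at least highly split) sublattice once $n \ge 92$, by standard facts on the classification of indefinite integral quadratic forms over $\Z$ (the Eichler/Hasse--Minkowski machinery and Meyer's theorem guarantee many hyperbolic planes as orthogonal summands for large $n$).

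Next, on the chosen lattice $f$, Vinberg's theorem (valid since $n \ge 92 \ge 30$) tells us that $\Gamma := \mathrm{O}(f,\Z)/\mathrm{R}(f,\Z)$ is infinite. The group $\mathrm{O}(f,\Z)$ is a lattice in $\mathrm{O}(n,1)$, hence finitely generated and linear over $\Z$, so the quotient $\Gamma$ is finitely generated and linear over $\Q$ (it is a quotient, but arithmetic groups of $\Q$-rank one are well understood; more robustly, $\Gamma$ acts on the Bruhat--Tits/building-type data coming from Vinberg's construction, or one uses that $\mathrm{R}(f,\Z)$ is itself the Weyl group of a hyperbolic Coxeter polytope $P$ and $\Gamma$ embeds in the symmetry group of $P$, which is a group of isometries of $\mathbb{H}^n$ and hence linear). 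By the Tits alternative, a finitely generated linear group either contains a non-abelian free subgroup or is virtually solvable. So it remains to rule out the virtually solvable case.

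To exclude virtual solvability I would argue that $\Gamma$ cannot be virtually solvable once $n$ is large, using the geometry of the Coxeter polytope $P$ whose reflections generate $\mathrm{R}(f,\Z)$: the symmetry group of $P$ acts on $\mathbb{H}^n$ with $P$ as a fundamental domain for $\mathrm{R}(f,\Z)$, so a solvable-by-finite subgroup would force $P$ to have very restricted combinatorics (essentially virtually abelian symmetry, hence polynomially many facets in a controlled pattern), contradicting the known \emph{super-exponential} lower bounds on the number of facets of Vinberg's polytopes in high dimension — indeed this growth is exactly the mechanism behind the Agol--Belolipetsky--Emery--Kellerhals--Vinberg-type non-reflectivity results and is implicit in Vinberg's \cite{Vi1,Vi3}. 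Concretely, from the infinitude of $\Gamma$ and the rich facet structure one extracts two symmetries generating a group that acts on a suitable subspace (or on the nerve of $P$) with positive exponential growth, forcing non-virtual-solvability; combined with the Tits alternative this yields the free subgroup. The main obstacle I expect is precisely this last step: making the passage from ``$\Gamma$ infinite'' (Vinberg) to ``$\Gamma$ not virtually solvable'' effective and quantitative enough to kick in exactly at $n = 92$, which is presumably where the bookkeeping of facet counts versus the linear-algebraic constraints of a solvable action gives the threshold; everything before that is soft (commensurability invariance, linearity, Tits alternative), but this facet-counting versus solvable-rank comparison is where the real work, and the specific constant $92$, must come from.
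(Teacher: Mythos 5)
There is a genuine gap, and it sits exactly where you place your ``main obstacle.'' The soft part of your plan is essentially sound and is in fact implicit in the paper: because $\mathrm{O}^+(f,L)$ splits as $\mathrm{R}(f,L)\rtimes \mathrm{Stab}(D^\circ)$ for a fundamental chamber $D$, the reflective quotient is isomorphic to the stabilizer of $D^\circ$, hence is a finitely generated linear group, and the Tits alternative reduces the theorem to excluding virtual solvability. But your proposed exclusion argument is not a proof. There is no ``super-exponential lower bound on the number of facets of Vinberg's polytope'' available here (for $n\ge 30$ Vinberg's theorem says the polytope has \emph{infinitely} many facets, i.e.\ the quotient is infinite, and nothing more), and no mechanism is given for converting facet combinatorics into non-amenability. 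The Conway case $n=25$ is a concrete warning: there the reflective quotient is infinite, finitely generated and linear, yet virtually abelian (the affine isometry group of the Leech lattice), so infinitude plus Tits can never suffice by itself; genuinely new arithmetic input tied to the dimension is required, and your sketch does not supply it. The paper's actual work is precisely this input: it exhibits three negative roots in the interior of the chamber lying in three distinct $\mathrm{O}^+$-orbits, whose associated Cartan involutions lie in $\mathrm{Stab}(D^\circ)$ and generate a free group by a Schottky/ping-pong argument along non-asymptotic geodesics; the existence of three such orbits is equivalent to the existence of three inequivalent even positive definite forms of discriminant $2$ and dimension $41$ not representing $2$ (Proposition~\ref{prop 41}), proved via Siegel's mass formula and King's computation. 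The constant $92$ comes from the genus-theoretic splitting $f\sim \alpha q\oplus t$ with $q$ the even unimodular form of signature $(41,1)$, not from any facet count.

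A secondary problem is your commensurability reduction. If $L'\subseteq L$ has finite index, a root of $L$ need not be a root of $L'$ (the reflection need not preserve $L'$), and a reflection has no nontrivial proper powers to pass to, so $\mathrm{R}(f,L)$ and $\mathrm{R}(f,L')$ need not be commensurable and the reflective quotient is not obviously a commensurability invariant. The paper's Step~C does a careful one-directional version: it enlarges $L$ to a specific $\tilde L$ preserved by $\mathrm{O}(f,L)$, uses that every root of $L$ is then a root of $\tilde L$ to get a map of quotients with finite-index image, and only then invokes genus theory (after killing invariant factors $p^m$, $m\ge 2$) to arrange the orthogonal splitting off of $\alpha q$. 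You would need something of this precision rather than a blanket appeal to commensurability invariance.
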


By combining Theorem \ref{thm main} with the results of \cite{Br} we get:

\begin{corol} Let $n \ge 92$ and let $f$ be a classically integral quadratic form of signature $(n,1)$.
Let $\mathbb{H}^n$ be the $n$-dimensional hyperbolic space and let
$\Gamma$ be a finite index subgroup of $\mathrm{R}(f,\Z)$. Then the bottom of the $L^2$-spectrum 
of the Riemann manifold $\Gamma \setminus \mathbb{H}^n$ is positive.
\end{corol}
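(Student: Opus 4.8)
The plan is to derive the Corollary by combining Theorem~\ref{thm main} with the amenability--versus--spectrum dictionary of \cite{Br}: if $Y \to Z$ is a normal Riemannian covering of a finite-volume hyperbolic manifold with deck group $G$, then the bottom $\lambda_0(Y)$ of the $L^2$-spectrum of the Laplacian on $Y$ is positive if and only if $G$ is non-amenable. The first step is to set the stage arithmetically. Since $f$ has signature $(n,1)$, the group $\mathrm{O}^+(f,\Z)$ is a lattice in $\mathrm{Isom}(\mathbb{H}^n)$ by Borel--Harish-Chandra; because an indefinite rational form in $n+1 \ge 5$ variables is isotropic (Meyer), this lattice is non-uniform, so $\mathrm{O}^+(f,\Z)\backslash\mathbb{H}^n$ has finite volume. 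By Selberg's lemma I choose a torsion-free normal subgroup $\Lambda \normal \mathrm{O}^+(f,\Z)$ of finite index and put $M := \Lambda\backslash\mathbb{H}^n$, a finite-volume hyperbolic $n$-manifold.

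Next I exhibit a non-amenable deck group over $M$. Every hyperbolic reflection $r_e$ (in a vector $e$ with $f(e)>0$) fixes the hyperplane $e^{\perp}$, which has signature $(n-1,1)$ and therefore meets the hyperboloid $\{f=-1\}$; hence $r_e$ preserves each of its two sheets, i.e. $r_e \in \mathrm{O}^+(f,\Z)$. Thus $\mathrm{R}(f,\Z) \le \mathrm{O}^+(f,\Z) \le \mathrm{O}(f,\Z)$ and $Q := \mathrm{O}^+(f,\Z)/\mathrm{R}(f,\Z)$ has index at most two in $\mathrm{O}(f,\Z)/\mathrm{R}(f,\Z)$. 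By Theorem~\ref{thm main} (applied with $L=\Z^{n+1}$) the latter contains a non-abelian free group $F$; then $F \cap Q$ has finite index in $F$, hence is itself free of rank $\ge 2$ by Nielsen--Schreier, so $Q$ is non-amenable. Setting $R_\Lambda := \Lambda \cap \mathrm{R}(f,\Z)$, we have $R_\Lambda \normal \Lambda$ (an intersection of the normal subgroup $\mathrm{R}(f,\Z)$ with $\Lambda$), $R_\Lambda$ is torsion-free, and the natural injection $\Lambda/R_\Lambda \hookrightarrow Q$ has image $\Lambda\,\mathrm{R}(f,\Z)/\mathrm{R}(f,\Z)$, of index dividing $[\mathrm{O}^+(f,\Z):\Lambda] < \infty$ in $Q$; a finite-index subgroup of the non-amenable $Q$ is non-amenable, so $\Lambda/R_\Lambda$ is non-amenable. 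Therefore $\tilde M := R_\Lambda\backslash\mathbb{H}^n \to M$ is a normal Riemannian covering of the finite-volume manifold $M$ with non-amenable deck group, and \cite{Br} gives $\lambda_0(\tilde M) > 0$. (Note that $\mathrm{R}(f,\Z)$ has infinite index in $\mathrm{O}^+(f,\Z)$, so $\tilde M$ has infinite volume and this conclusion is not vacuous.)

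Finally I transfer positivity of $\lambda_0$ from $\tilde M$ to $\Gamma\backslash\mathbb{H}^n$. I use the elementary fact that $\lambda_0$ is invariant under a finite Riemannian covering $\pi : N \to Y$: pulling back compactly supported test functions gives $\lambda_0(N) \le \lambda_0(Y)$, while sending $u \in C_c^\infty(N)$ to the compactly supported Lipschitz function $x \mapsto \big(\sum_{y\in\pi^{-1}(x)} u(y)^2\big)^{1/2}$ on $Y$ gives, via Kato's inequality for its gradient, $\int_Y |\nabla(\cdot)|^2 \le \int_N |\nabla u|^2$ and $\int_Y(\cdot)^2 = \int_N u^2$, whence $\lambda_0(Y) \le \lambda_0(N)$ (this works verbatim for orbifolds, covering the case where $\Gamma$ has torsion). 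Now $\Gamma$ and $R_\Lambda$ are finite-index subgroups of $\mathrm{R}(f,\Z)$, so $\Gamma' := \Gamma \cap R_\Lambda$ has finite index in each, and applying the invariance to the finite coverings $\Gamma'\backslash\mathbb{H}^n \to \Gamma\backslash\mathbb{H}^n$ and $\Gamma'\backslash\mathbb{H}^n \to R_\Lambda\backslash\mathbb{H}^n$ yields $\lambda_0(\Gamma\backslash\mathbb{H}^n) = \lambda_0(\Gamma'\backslash\mathbb{H}^n) = \lambda_0(\tilde M) > 0$.

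The substantive new input is Theorem~\ref{thm main}, which supplies the non-amenability of $Q$; the rest --- Borel--Harish-Chandra, Meyer's theorem, Selberg's lemma, Nielsen--Schreier, and the finite-cover invariance of $\lambda_0$ --- is standard. The one point requiring care is the appeal to \cite{Br} in the \emph{non-compact}, finite-volume setting: the symmetrization argument proving the hard direction of Brooks' theorem must be adapted near the cusps of $M$, where the injectivity radius degenerates; since this is exactly the scenario treated in \cite{Br}, I invoke it as a black box. I expect this --- rather than any of the group-theoretic steps --- to be the only place where some genuine analysis is needed.
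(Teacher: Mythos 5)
Your proof is correct and follows exactly the route the paper intends (the paper offers no written argument beyond ``combining Theorem~\ref{thm main} with the results of \cite{Br}''): extract non-amenability of $\mathrm{O}^+(f,\Z)/\mathrm{R}(f,\Z)$ from the free subgroup of Theorem~\ref{thm main}, apply Brooks' theorem to the normal covering $R_\Lambda\backslash\mathbb{H}^n \to \Lambda\backslash\mathbb{H}^n$ over a torsion-free finite-index lattice, and use finite-cover invariance of $\lambda_0$ to pass to an arbitrary finite-index $\Gamma \le \mathrm{R}(f,\Z)$. The one point you rightly flag --- that \cite{Br} must apply to a non-compact, finite-volume base --- is indeed covered there (the finite-volume hyperbolic manifold has $\lambda_0=0$ isolated from the essential spectrum), so no gap remains.
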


The following proposition is a key ingredient in the proof of Theorem \ref{thm main}.

\begin{prop}\label{prop 41} There exist three non-equivalent even integral positive definite quadratic forms of discriminant $2$ and dimension $41$ which do no represent the number $2$.
\end{prop}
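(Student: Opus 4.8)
The plan is to translate the assertion into lattice language and then to prove, by the mass formula, that in fact almost all such forms have the desired property. A positive definite even quadratic form of dimension $41$ and discriminant $2$ is the same datum as an even lattice $L\subseteq\R^{41}$ with $\det L=2$; since an even lattice represents only even integers, ``$L$ does not represent $2$'' is equivalent to $\min(L)\ge 4$, i.e.\ $L$ has no vector of norm $2$. Two such forms are $\Z$-equivalent precisely when the lattices are isometric, so it suffices to produce three pairwise non-isometric even lattices of rank $41$, determinant $2$, with no norm-$2$ vector.

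First I would note that all even positive definite lattices of rank $41$ and determinant $2$ lie in one genus $\mathcal{G}$, which is non-empty: $A_1\oplus E_8^{\oplus 5}$ is a member, and the genus is unique because the discriminant group is $\Z/2\Z$ and, by Milgram's formula, the condition $41\equiv 1\pmod 8$ forces the discriminant quadratic form to be the one with $q(1)=\tfrac12$ (that of $A_1$); a genus of even lattices is determined by its signature and discriminant form once the rank exceeds the minimal number of generators of the discriminant group by at least two, which holds here. Next I would invoke the Minkowski--Siegel mass formula (in the explicit shape of Conway--Sloane) to get that the mass $m(\mathcal{G})$ is astronomically large, certainly $m(\mathcal{G})>10^{3}$. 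Finally, to control the classes with $\min(L)=2$, I would use the Siegel--Weil formula: the genus-average $a_2:=m(\mathcal G)^{-1}\sum_{L\in\mathcal G}N_2(L)\,|\Aut(L)|^{-1}$ of the number $N_2(L):=\#\{v\in L:\langle v,v\rangle=2\}$ equals the product of the local representation densities of $2$, whose archimedean factor carries the factor $\Gamma(41/2)^{-1}$ and is therefore tiny, whose odd-prime factors are $1+O(1/p)$, and whose factor at the single ramified prime $2$ is bounded; hence $a_2<1$, and in fact $a_2$ is minuscule.

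Putting this together, each class with $\min(L)=2$ has $N_2(L)\ge 1$, so the total mass of such classes is at most $\sum_{L\in\mathcal G}N_2(L)\,|\Aut(L)|^{-1}=a_2\,m(\mathcal G)$, and therefore the classes with $\min(L)\ge 4$ have total mass at least $(1-a_2)\,m(\mathcal G)>1$. Since $-\id\in\Aut(L)$ for every $L$, each isometry class contributes at most $\tfrac12$ to the mass, so there are more than two --- hence at least three, and indeed very many --- pairwise non-isometric even positive definite lattices of rank $41$ and determinant $2$ with no norm-$2$ vector, which is the claim. I expect the main obstacle to be the rigorous verification that $a_2<1$: the archimedean and odd-prime local densities are routine, but the $2$-adic density for the genus of $A_1\oplus E_8^{\oplus 5}$ must be evaluated (or suitably bounded) with some care, precisely because $2$ is the prime at which the form is not unimodular. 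Alternatively, one can bypass the local analysis altogether by exhibiting three such lattices explicitly --- for instance by iterating Kneser's neighbour construction a few times from $A_1\oplus E_8^{\oplus 5}$, or by reading off sufficiently dense $41$-dimensional lattices from the literature --- and checking directly that their minimum is $4$; the mass estimate then simply explains why ``three'' vastly undercounts.
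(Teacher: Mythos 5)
Your overall strategy coincides with the paper's: fix one genus of even positive definite rank-$41$, determinant-$2$ forms, use Siegel's mass formula to show that the weighted average number of representations of $2$ is small (the archimedean density, carrying $\Gamma(\tfrac{41}{2})^{-1}$, does most of the work), show that the total mass of the genus is large, and conclude that the classes not representing $2$ carry most of the mass and hence number at least $3$. Where you genuinely diverge is in the mass lower bound. You propose to read $m(\mathcal{G})>10^3$ directly off the explicit Minkowski--Siegel mass formula; this is true, but it is precisely the computation the paper is engineered to avoid. The paper instead imports King's computed mass $\tfrac{10968923}{2}$ of root-free even unimodular $32$-dimensional forms $g_i$, sets $f_i:=g_i\oplus e_8\oplus 2x^2$, uses Eichler's unique-decomposition theorem both to see that these classes are pairwise distinct and to compute $|\mathrm{O}(f_i,\Z)|=|\mathrm{O}(g_i,\Z)|\cdot|\mathrm{O}(e_8,\Z)|\cdot 2$, and then exploits the fact that each $f_i$ represents $2$ exactly $242$ times: plugging this into the same Siegel average $\le\tfrac{1}{20}$ forces the total mass of the genus to exceed roughly $20\cdot 241\cdot M_1\approx 14.5$ with $M_1\ge\tfrac{3}{1000}$, and hence at least $28$ classes of minimum $\ge 4$. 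So the paper never evaluates the mass of the $41$-dimensional genus at all; your route buys a conceptually cleaner statement at the price of an explicit mass evaluation, while the paper's buys a computation-free proof at the price of King's input.

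Two places in your write-up need repair before it is a proof. First, the inequality $m(\mathcal{G})>10^3$ is asserted, not verified; since it is the quantitative crux, you must either carry out the local mass computation (including the factor at the ramified prime $2$) or replace it by a device such as the paper's. Second, your claim that the odd-prime local densities are $1+O(1/p)$ is too weak as stated: $\prod_p\bigl(1+c/p\bigr)$ diverges, so this bound does not control the Euler product. The bound actually needed and available (Lemma 9.4 of Chapter 2 in Milnor--Husemoller, used in the paper) comes from splitting off $x_1^2+\cdots+x_8^2$ over $\Z_p$ and gives $\mathrm{D}f_p^{-1}(2)\le\frac{1-p^{-4}}{1-p^{-3}}=1+O(p^{-3})$, whence the product over odd primes is at most $\tfrac{11}{10}$. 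With those two items supplied your argument closes; your fallback of exhibiting three such lattices explicitly would also suffice but is a different, purely computational proof.
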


{\flushleft{\bf Remark.}} If $k \le 3$ then there do not exist three non-equivalent 
even integral positive definite quadratic forms of discriminant 
$2$ and dimension $8k+1$ which do not represent 2. Such forms probably exist for $k=4$ and then the bound $92$ in Theorem \ref{thm main}
could be improved to $76$.
\\ \\
{\bf Remark.} In a paper in preparation we extend the results of this note to
general arithmetic lattices  in $\mathrm{O}(n,1)$ for large $n$, which  come from quadratic forms
over number fields, including the case of anisotropic forms.
\\ \\
{\bf Acknowledgment.} The author is thankful to Peter Sarank for asking him if the reflective quotient is always
non-amenable in large enough dimensions and for many helpful discussions.

\section{Proof of Theorem \ref{thm main}}\label{sec proof}
{{\flushleft{\bf Preliminaries.}} 
Let $L \subseteq \Q^n$ be a lattice and let $f$ be a quadratic form of signature $(n,1)$ which is defined on $\R^{n+1}$
and takes integral values on $L$. For every $x,y\in \R^{n+1}$ define 
$(x,y):=\frac{1}{4}(f(x+y)-f(x-y))$. The form $(\cdot,\cdot)$ is bilinear on $\R^{n+1}$.  By replacing $f$ with $2f$ is necessary, we can assume that $f$ is classically integral on $L$. (Classically integral  means that $(v,u)$ is an integer for every $v,u\in L$.)

Let $k$ be a positive integer. Every one of the connected components of the hyperboloid $\{x \in \R^{n+1} \mid f(x)=-k\}$ is a model for the $n$-dimensional hyperbolic space. Let $\Hy$ be one of these components. Let $\mathrm{O}^+(f,L)$ the subgroup  of $\mathrm{O}(f,L)$ preserving $\Hy$. 
The group $\mathrm{O}^+(f,L)$ has index two in $\mathrm{O}(f,L)$ and 
every one of its elements acts as a hyperbolic isometry of $\Hy$. 
An element $v \in L$ is called primitive if $\frac{1}{m}v$ does not belong to $L$ for every $m \ge 2$.
An element $v \in L$ is called isotropic if $f(v)=0$.
A non-isotropic primitive element $v \in L$ is called a root if the linear map $r_v:\R^{n+1} \rightarrow \R^{n+1}$ defined by
\begin{equation}\label{eq reflection}
r_v(x)=x-\frac{2(v,x)}{f(v)}v
\end{equation}
preserves $L$. Since every root $v$ is a primitive element, the requirement that $r_v$ preserves $L$ is equivalent to the requirement that for every $u\in L$, the number
$\frac{2(v,u)}{f(v)}$ is an integer.  
A root $v$ is called positive or negative depending on whether $f(v)>0$ or $f(v)<0$. Let $\mathcal{R}^+(f,L)$ and $\mathcal{R}^-(f,L)$ be the set of positive roots and the set of negative roots respectively. If $v$ is a positive root of $L$ then $r_v$ belongs to $ \mathrm{O}^+(f,L)$ and it is a hyperbolic reflection
in the hyperbolic hyperplane $H_v:=\{x \in \Hy \mid (v,x)=0\}$.
However, if $v$ is a negative root then $r_v$ belongs to $\mathrm{O}(f,L)$ but not to 
$\mathrm{O}^+(f,L)$. In this case, the map $c_v:=-r_v$ belongs to $\mathrm{O}^+(f,L)$ and it is a Cartan involution whose base point $u$ is the intersection of the linear subspace $\{\alpha v \mid \alpha \in \R\} \subseteq \R^{n+1}$ with $\mathcal{H}$. Thus, $c_v$ fixes $u$ and reverses the geodesic passing through $u$.

Let $D^\circ$ be a connected component of $\Hy \setminus \cup_{v \in \mathcal{R}^+(f,L)}H_v$. The closure $D$ of $D^\circ$ is a fundamental domain for $\mathrm{R}(f,L)$ and $D^\circ$ is the interior of $D$. Every $g \in \mathrm{O}^+(f,L)$ permutes the positive roots  and thus also the connected components of 
$\Hy \setminus \cup_{v \in \mathcal{R}^+(f,L)}H_v$. It follows that $\mathrm{O}^+(f,L)$ is the semidirect product 
\begin{equation}\label{eq semi direct}
\mathrm{O}^+(f,L) = \mathrm{R}(f,L) \mathbb{o} \mathrm{Stab}(D^\circ)
\end{equation}
where $\mathrm{Stab}(D^\circ)$ is the stabilizer of $D^\circ$ under the action of $\mathrm{O}^+(f,L)$ on the components. The group $\mathrm{Stab}(D)$ is naturally isomorphic to a subgroup of $\aut(D)$ where $\aut(D)$ is the symmetry group of the polyhedron $D$.
\\ \\
{\flushleft{\bf Step A:}}
We start with the special case $L=\Z^{42}$ and $f=q$ where $q$ is 
the unique (up to equivalence) even unimodular quadratic form of signature $(41,1)$.  We choose $\mathcal{H}:=\{x \in \R^{42} \mid q(x)=-2 \}$ as a model for the 41-dimensional 
hyperbolic space. 
Since $q$ is even and unimodular, an element $v \in L$ is a positive root if and only if $q(v)=2$. Similarly, an element $v \in L$ is a
negative root if and only if $q(v)=-2$. Thus, if $v$ is a negative root then either $v$ or $-v$ is the base point of $-r_v$.
For every element $v \in \Z^{42}$ let 
$\mathcal{O}(v)$ be its orbit under $\mathrm{O}^+(q,L)$.
\begin{lemma}\label{lem 3 elements} If $v,u,w\in D^\circ$ are negative roots and $\Orb(v)$, $\Orb(u)$ and $\Orb(w)$ are distinct then the group generated by the Cartan involutions $c_v$, $c_u$ and $c_v$ contains a non-abelian free group. In particular, if such $v$, $u$ and $w$ exist then $\mathrm{O}(q,L)/\mathrm{R}(q,L)$
contains a free group.
\end{lemma}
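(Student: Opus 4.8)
The plan is as follows. Since $q$ is even and unimodular, a negative root satisfies $q=-2$ and lies on one sheet of the hyperboloid; as $v,u,w\in D^\circ\subseteq\Hy$, each of them \emph{is} the base point of its Cartan involution, so $c_v,c_u,c_w$ are the point reflections of $\Hy\cong\mathbb{H}^{41}$ at $v,u,w$ (each fixes its point and acts as $-\mathrm{id}$ on the tangent space there). Set $\Gamma:=\langle c_v,c_u,c_w\rangle$. Since each $c_x$ fixes $x\in D^\circ$ and permutes the chambers of $\mathrm{R}(q,L)$, the chamber $c_x(D^\circ)$ contains $x$ and hence equals $D^\circ$; thus $\Gamma\le\mathrm{Stab}(D^\circ)$, and by \eqref{eq semi direct} the group $\mathrm{Stab}(D^\circ)$ meets $\mathrm{R}(q,L)$ trivially, so it embeds in $\mathrm{O}^+(q,L)/\mathrm{R}(q,L)\le\mathrm{O}(q,L)/\mathrm{R}(q,L)$. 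It is therefore enough to show that $\Gamma$ contains a non-abelian free subgroup, and I claim this holds as soon as $v,u,w$ are not collinear. Indeed, then $c_vc_u$ and $c_vc_w$ are hyperbolic isometries --- translations along the geodesics through $v,u$ and through $v,w$, by $2\dist(v,u)$ and $2\dist(v,w)$ --- whose axes are distinct geodesics through $v$; two distinct geodesics through a common point share no endpoint at infinity, so the fixed-point pairs of $c_vc_u$ and $c_vc_w$ on $\partial\mathbb{H}^{41}$ are disjoint, and for $N$ large $(c_vc_u)^N$ and $(c_vc_w)^N$ satisfy the ping-pong conditions at the four fixed points and generate a free group of rank $2$ inside $\Gamma$.

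Everything thus reduces to proving that $v,u,w$ do not lie on a common geodesic --- and this is exactly where the hypothesis $\Orb(v)\ne\Orb(u)\ne\Orb(w)\ne\Orb(v)$ (which in particular forces $v,u,w$ to be distinct points) enters. Suppose for contradiction that $v,u,w\in\ell$ for a geodesic $\ell$. The subgroup $S\le\mathrm{O}^+(q,L)$ preserving $\ell$ then preserves the rational plane $P=\mathrm{span}_\Q(v,u)$, on which $q$ has signature $(1,1)$, together with the lattice $L\cap P$; hence $S$ acts on $\ell\cong\R$ through a discrete group of isometries (the kernel of $S\to\mathrm{Isom}(\ell)$ is the finite pointwise stabiliser of $P$, and the image is the image of an arithmetic subgroup of $\mathrm{O}(q|_P)$). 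In a discrete isometry group of $\R$ the translations form $\mu\Z$ for some $\mu>0$ and every point reflection present lies in one coset of $\tfrac\mu2\Z$; since $c_v,c_u,c_w\in S$ are the point reflections at $v,u,w$, after translating that coset to $\tfrac\mu2\Z$ and putting $v$ at $0$, the positions of $u,w$ are $\tfrac\mu2 m,\tfrac\mu2 n$ with $m,n\in\Z$ distinct and nonzero. Now $c_vc_u,c_vc_w$ restrict on $\ell$ to translations by $\mu m,\mu n$, so the point reflections occurring in $\Gamma|_\ell$ lie precisely at $\tfrac\mu2\gcd(m,n)\Z$; writing $m=\gcd(m,n)\alpha$, $n=\gcd(m,n)\beta$ with $\gcd(\alpha,\beta)=1$, the integers $\alpha,\beta$ are not both even, and a short parity check shows that at least one of the three pairwise midpoints of the positions of $v,u,w$ lies in $\tfrac\mu2\gcd(m,n)\Z$. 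A point reflection of $\ell$ at that midpoint belongs to $\Gamma|_\ell$, hence is the restriction of some $\gamma\in\Gamma\le\mathrm{O}^+(q,L)$; since a point of $\ell\subseteq\Hy$ is determined by its position, $\gamma$ carries one of $v,u,w$ onto another of them --- or, in the degenerate case where that midpoint coincides with the position of the third point, the corresponding Cartan involution already does so. Either way two of $\Orb(v),\Orb(u),\Orb(w)$ coincide, a contradiction; hence $v,u,w$ are not collinear and the lemma follows.

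The main obstacle is the collinear case just sketched. What needs genuine care there is (i) that the stabiliser $S$ of $\ell$ is discrete in $\mathrm{Isom}(\ell)$ --- this is what confines $v,u,w$ to one $\tfrac\mu2\Z$-coset and makes the relevant translation lengths commensurable --- and (ii) that the point reflection produced by the $\gcd$/parity bookkeeping genuinely belongs to $\Gamma$ and genuinely moves one of $v,u,w$ to another, so that the distinctness of the orbits is really violated rather than merely the relations among $c_v,c_u,c_w$ re-derived. The other ingredients --- the point-reflection description of $c_v,c_u,c_w$, the embedding $\Gamma\hookrightarrow\mathrm{O}(q,L)/\mathrm{R}(q,L)$ coming from \eqref{eq semi direct}, and the ping-pong/Schottky argument for non-collinear $v,u,w$ --- are standard.
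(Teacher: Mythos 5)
Your proof is correct, and its overall skeleton --- the point reflections $c_v,c_u,c_w$ fix points of $D^\circ$ and hence lie in $\mathrm{Stab}(D^\circ)$, which injects into the quotient by \eqref{eq semi direct}; non-collinearity plus ping-pong on high powers of $c_vc_u$ and $c_vc_w$ gives the free group --- coincides with the paper's. The one place you genuinely diverge is the collinearity exclusion, which is also where the hypothesis on the orbits enters. The paper proves the stronger statement that \emph{any} geodesic meets at most two $\mathrm{O}^+(q,L)$-orbits of negative roots: it picks a pair $v_1,v_2$ of negative roots on $\ell$ at minimal distance (discreteness of $L$ guarantees the minimum is attained), notes that $v_3:=c_{v_1}(v_2)$ is a negative root with $v_1$ the midpoint of $v_2$ and $v_3$, and uses the translation $c_{v_1}c_{v_2}$ to push any further negative root into the segment $[v_2,v_3]$, where minimality forces it to equal one of $v_1,v_2,v_3$. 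You instead work only with $v,u,w$ themselves: discreteness of the stabiliser of $\ell$ confines their positions to a single coset of $\tfrac{\mu}{2}\Z$, and a $\gcd$/parity computation manufactures a point reflection in $\Gamma$ at one of the three pairwise midpoints, which carries one of $v,u,w$ onto another. Both arguments ultimately rest on the same fact (the relevant infinite dihedral group acting on $\ell$ is discrete); the paper's version is shorter and yields the cleaner ``at most two orbits per geodesic'' statement, while yours stays entirely inside $\Gamma$ and avoids selecting a minimal-distance pair among \emph{all} negative roots on $\ell$. Your parenthetical justification of the discreteness of $S$ on $\ell$ is slightly loose but repairable in one line: $\mathrm{O}^+(q,L)$ acts properly discontinuously on $\Hy$, hence $S$ acts properly discontinuously on $\ell$ with finite point stabilisers, so its image in the isometry group of $\ell$ is discrete.
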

\begin{proof}

We start by showing that any geodesic in $\Hy$ intersects non-trivially at most 2 orbits of negative roots. 
Let $l$ be a geodesic which contains at least two negative roots. Choose two negative roots 
$v_1$ and $v_2$ on $l$ with minimal distance.  The negative root $v_3:=c_{v_1}(v_2)$ belongs to $l$ and $v_1$ is between $v_2$ and $v_3$. The Cartan involution $c_v$ fixes $v$ and inverses the geodesics in $v$ so $\dist(v_1,v_2)=\dist(v_1,v_3)=\frac{1}{2}\dist(v_2,v_3)$. The map
$g:=c_{v_1}c_{v_2}$ is a hyperbolic translation along $l$ with translation length $\dist(v_2,v_3)$. If $v_4$ is any negative root on $l$ then there exists $m \in \Z$ such that 
$g^m(v_4)$ belongs to the geodesic segment $[v_2,v_3]\subseteq l$. Since 
$\dist(v_1,v_2)$ is minimal, $g^m(v_4)$ equals to  $v_1$ or $v_2$ or $v_3$. Thus, $v_4$ belongs to $\mathcal{O}(v_1)$ or $\mathcal{O}(v_2)$. Hence, it is not possible that $v$, $u$ and $w$ are contained in the same geodesic. 

Let $l_1$ be the geodesic containing $v$ and $u$ and 
let $l_2$ be the geodesic containing $v$ and $w$. As before,
$g_1:=c_vc_u$ and $g_2:=c_vc_w$ are hyperbolic translations along $l_1$ and $l_2$ with translation lengths $2\dist(v,u)$ and $2\dist(v,w)$. Since $l_1$ and $l_2$ do not meet on the boundary of $\Hy$ there exists an $m$ such that $g_1^m$ and $g_2^m$ generate a Schottky group which is free of rank 2.

The Cartan involutions  $c_v$, $c_u$, and $c_w$ belong to  belong to the stabilizer of $D^\circ$ since their base points belong to $D^\circ$. The last sentence of the Lemma follows from Equation (\ref{eq semi direct}).
\end{proof}

Every negative root $v$ induces an even integral positive definite quadratic form of discriminant 2 on
the orthogonal complement $L_v:=\{u \in L \mid (v,u)=0\}$ of $v$. By identifying $L_v$ with $\Z^{41}$,  $v$ induces 
an equivalence class $Q(v)$ of even integral positive definite quadratic forms of discriminant 2 on $\Z^{41}$. Lemma 3.1.2 of \cite{Bo1} implies that the correspondence $v \mapsto Q(v)$ is a bijection between orbits of negative roots and equivalence classes of even $41$-dimensional positive definite quadratic forms of discriminant 2. 
A negative root $v\in \mathcal{H}$ does not belong to $\cup_{u \in \mathcal{R}^+(q,L)}H_u$ if and only if the quadratic forms in $Q(v)$ do not represent the number 2.
The reflection group $\mathrm{R}(f,L)$ acts transitively on the connected components of  $\cup_{u \in \mathcal{R}^+(q,L)}H_u$.  
Thus, if $v\in \mathcal{H}$ is a negative root then
$\Orb(v)\cap D^\circ \ne \emptyset$ if and only if the quadratic forms in $Q(v)$ do not represent the number $2$. Hence, Proposition \ref{prop 41} implies the existence of $v$, $u$ and $w$ which satisfy the assumptions of Lemma \ref{lem 3 elements}. This completes the proof that $\mathrm{O}(q,\Z^{42})/\mathrm{R}(q,\Z^{42})$ contains a non-abelian free group. 
\\ \\
{\bf Step B:} Let $L=\Z^{n+1}$ and let
$$f(x_1,\ldots,x_{n+1})=\alpha{q}(x_1,\ldots,x_{42}) \oplus t(x_{43},\ldots,x_{n+1})$$
where $t$ is some classically integral positive definite quadratic form
and, $q$ is as in Step A and $\alpha$ is some positive integer. 
Identify $\R^{42}$ with the subspace of $\R^{n+1}$ consisting of the elements with zeros on their last $n-41$ entries. 
Let $\mathcal{H}^n$ be one of the connected  components of 
$\{x \in \R^{n+1} \mid f(x)=-2\alpha\}$ and denote $\mathcal{H}^{41}:=\mathcal{H}^{n}\cap\R^{42}$.
Choose $\mathcal{H}^n$ as a model for the hyperbolic $n$-dimensional space and note that
$\mathcal{H}^{41}$ can be identified with the 41-dimensional hyperbolic space which was considered
in Step A. The group $\mathrm{O}(q,\Z)$ can be identified with the subgroup of $\mathrm{O}(f,\Z)$ consisting of the automorphs  which act as the identity on the last $n-41$ coordinates. Under these identifications, the action of $\mathrm{O}^+(q,\Z)$ on $\mathcal{H}^{41}$ is just its usual action on the 41-dimensional hyperbolic space. If $v \in \Z^{42}$ is a positive root of $q$ then it is also a positive root of $f$. Hence, the inclusion of  $\mathrm{O}(q,\Z)$ in $\mathrm{O}(f,\Z)$ induces a homomorphism $$\rho:\mathrm{O}({q},\Z)/\mathrm{R}(q,\Z)\rightarrow \mathrm{O}(f,\Z)/\mathrm{R}(f,\Z).$$
Thus, it will be enough to show that there exists a fundamental domain $D \subseteq \mathcal{H}^n$ for $\mathrm{R}(f,\Z)$ such that  $D \cap \mathcal{H}^{41}$ is a fundamental domain for $\mathrm{R}(q,\Z)$. Indeed, if this is true then $\rho$ is invective and the result follows from Step A.
In order to show that such fundamental domain $D $ exists, it is enough to prove that for every 
$v \in \mathcal{R}^+(f,L)$ either $H_v \cap \mathcal{H}^{41}=\emptyset$ or $H_v \cap \mathcal{H}^{41}=\mathcal{H}^{41}$ or 
$H_v \cap \mathcal{H}^{41}=H_{w}$ for some $w \in \mathcal{R}^+(q,\Z^{42})$.

Fix $v \in \mathcal{R}^+(f,L)$ and let $u$ be its projection into $\R^{42}$. If $u=0$ then $H_v \cap \mathcal{H}^{41}=\mathcal{H}^{41}$. If $u \ne 0$ and
$q(u)\le 0$ then $H_v \cap \mathcal{H}^{41}=\emptyset$. We are left to deal with the case
$q(u)>0$.
Since $v$ is a root, for every $z \in L$
 $$\frac{2(v,z)}{\alpha q(u)+t(v-u)}=\frac{2(v,z)}{f(v)} \in \Z$$  where as before $(\cdot,\cdot)$ is the bilinear form induced by $f$. By  taking $z=u$ we get $$\frac{2\alpha q(u)}{\alpha q(u)+t(v-u)}=\frac{2(v,u)}{f(v)} \in \Z.$$ Since $t$ is positive definite, there are only two possibilities. Either $t(v-u)=\alpha q(u)$ or $t(v-u)=0$ (and then $v=u\in \R^{42}$). It follows that in both cases  $\frac{2(u,z)}{\alpha q(u)}$ is integral for every $z \in \Z^{42}$. Hence, $u$ is an integral multiply of some positive root $w \in \mathcal{R}^+(q,\Z^{42})$ and  $H_v \cap \mathcal{H}^{41}=H_{w}$. 
\\ \\
{\bf Step C:} This step deals with the general case. The arguments closely follows the ones in 
Vinberg's proof \cite{Vi3}. Let $L \subseteq \Q^n$ be a lattice and let $f$ be a quadratic form of signature $(n,1)$ defined on $\R^{n+1}$ such that $f$ is classically integral on $L$. Assume for the moment that there exists a lattice $\tilde{L}$ 
with the following properties:
\begin{itemize}
\item  $L \subseteq \tilde{L} \subseteq \Q^{n+1}$
\item $\tilde{L}$ is preserved by $\mathrm{O}(f,L)$.
\item $f$ is classically integral on $L$. 
\end{itemize}

The group  $\mathrm{O}(f,L)$ is a finite index subgroup of $\mathrm{O}(f,\tilde{L})$. Since 
every root of $L$ is also a root of $\tilde{L}$, the inclusion induces 
a homomorphism $$\rho:\mathrm{O}(f,L)/\mathrm{R}(f,L)\rightarrow \mathrm{O}(f,\tilde{L})/\mathrm{R}(f,\tilde{L})$$ and the image of $\rho$ has finite index
in $ \mathrm{O}(f,\tilde{L})/\mathrm{R}(f,\tilde{L})$. 
Thus, it is enough to prove the result for $\tilde{L}$ and $\tilde{F}$. 
The main idea is to find $\tilde{L}$ for which $f$ is an orthogonal sum as in Step B.
 
Let  
$$L^*:=\{v \in \Q^{n+1} \mid \forall \ u \in L. \ (v,u)\in \Z  \}$$
be the adjoint lattice of $L$. 
The group $L^*/L$ is a finite abelian
group so it is a direct sum of its $p$-Sylow subgroups. Each $p$-Sylow subgroup $P$ is a direct sum of cyclic $p$-groups. The sizes of the cyclic groups are uniquely determined by $P$ and called the invariant $p$-factors of $L$. If $L$ has an invariant $p$-factor $p^m$ for some prime $p$
and $m \ge 2$ then we can replace $L$ with $$\hat{L}:=\{v+u \mid v \in L\ \&\ u\in pL^* \cap p^{-1}L\}.$$ The form $f$ is still classically integral on $\hat{L}$ and $\mathrm{O}(f,L)$ preserves $\hat{L}$. In addition, $\hat{L}$  has a smaller discriminant than $L$ so after a finite 
number of such replacements we will get a lattice $\tilde{L} $ with the following properties: 
\begin{itemize}
\item[(\rmnum{1})] $L \subseteq \tilde{L}  \subseteq \Q^{n+1}$ and $f$ is classically integral on $\tilde{L}$
\item[(\rmnum{2})] $\mathrm{O}(f,L)$ preserves  $\tilde{L}$ and has a finite index in $\mathrm{O}(f,\tilde{L})$
\item[(\rmnum{3})] For every prime $p$, every invariant $p$-factor of $\tilde{L}$ equals to 1 or p
\end{itemize}

We choose a $\Z$-base for $\tilde{L}$ and identify it with $\Z^{n+1}$. We regard $f=f(x_1,\ldots,x_{n+1})$ as given in this base.  
Part (\rmnum{3}) implies that every form in the genus of $f$ is properly equivalent to $f$ (Theorem 1.5 of Chapter 11 in \cite{Ca}). Thus, if $h$ is a classically integral quadratic form
of signature $(m,1)$ and for every prime $p$ there exists a $p$-adic integral form $s_p$ such that $f$ is $\Z_p$-equivalent to $h \oplus s_p$, then there exists an integral positive definite quadratic form $s$ such that $f$ is $\Z$-equivalent to $h \oplus s$. 
Part (\rmnum{3}) implies that for every prime $p$ there are $p$-adic integral forms $s_{1,p}$ and $s_{2,p}$ whose discriminates are units
such that 
$$f(x_1,\ldots,x_{n+1})=s_{1,p}(x_1,\ldots,x_l) \oplus ps_{2,p}(x_{l+1},\ldots,x_{n+1})$$
and the dimension of at least one of them is greater than $46$ (since $n \ge 92$).  If the dimension of $s_{1,p}$ is grater than $46$ define $\delta_p:=0$, otherwise define $\delta_p:=1$. Let $q$ be the unique even unimodular quadratic from of signature $(41,1)$ and define 
$\alpha=\prod_{p}p^{42\delta_p}$.  

Let $p $ be a prime and denote $\alpha_p:=\alpha/p^{42\delta_p}$, $\alpha_p$ is a $p$-adic unit. Assume first that $p \ne 2$.
Every $p$-adic quadratic form of dimension $m$ with a unit discriminant $d$ is $\Z_p$-equivalent to 
$$x_1^2+\cdots+x_{m-1}^2+dx_m^2.$$
Since $47 \ge 42+1$, there exists 
an integral $p$-adic form $w_{\delta_p,p}$ such that  $s_{\delta_p,p}$ is $\Z_p$-equivalent
to  $\alpha_p q \oplus w_p$.

We are left to deal with the case $p=2$. Every integral $2$-adic quadratic form of dimension at least 5 is isotropic. 
Thus, if $h$ is a 2-adic quadratic form with a unit discriminant and dimension at least 5, then $h$ is equivalent to the sum $h_1+h_2$ where $h_1$ has dimension 2 and is equal to either $2x_1x_2$ or to $2x_1x_2+x_2^2$. Moreover, $2x_1x_2+(2x_3x_4+x_4^2)$ is equivalent to 
$(2x_1x_2+x_2^2)+(2x_3x_4+x_4^2)$. Since $q$ is $\Z_2$-equivalent to the sum of $21$ 
copies of $2x_1x_2$ and $47 = 2\cdot 22+3$, a straightforward argument shows that there exists a classically integral $2$-adic form $w_{2}$ such that  $s_{\delta_2,2}$ is $\Z_2$-equivalent
to  $\alpha_2 q \oplus w_2$.

Hence, for every prime $p$, $f$ is $\Z_p$-equivalent $\alpha q \oplus t_p$ for some  $p$-adic classically integral form $t_p$. Thus, 
$f$ is equivalent to $\alpha q \oplus t$ for some classically integral positive definite  quadratic from
$t$ and we can use Step $B$. This proof of Theorem \ref{thm main} is now complete.

\section{Proof of Proposition \ref{prop 41}}\label{sec prop}

There exist a vast literature about unimodular positive definite quadratic forms which do not represent small positive integers. Let us mention two such results. Conway and Thompson (Theorem 9.5 in \cite{MH}) proved that for every positive integer $k$ there exists a positive number $n(k)$ such that for every $n \ge n(k)$ there exists a unimodular positive definite quadratic form of dimension $n$ which does not represent any integer smaller than $k$. Their proof uses a simple form of Siegel's mass formula and the computations
are not complicated. King \cite{Ki} 
proved that  there are at least 10,000,000 non-equivalent even unimodular positive definite quadratic forms of dimension 32 which do not represent the number 2. King's proof is based on the general form of Siegel's mass formula and the computation are more involved and done by a computer. 

We will prove Proposition \ref{prop 41} by combining the argument of Conway and Thomson with King's theorem. This strategy allows us to avoid tedious computations. While the   
the statement if Proposition \ref{prop 41} is far from optimal, it is suitable for our needs.

Some preparation is needed before stating Siegel's mass formula.
Let $f$ be a positive definite integral quadratic form of dimension $n$. For every prime $p$ and every integer $m$, the $p$-density of $f$ at $m$ is defined to be
\begin{equation}\label{p denstiy}
Df_p^{-1}(m):=\lim_{k \rightarrow \infty} 
\frac{|\{\bar{x} \in (\Z/p^k\Z)^n \mid f(\bar{x}) = m\ \Mod\ (p^k)\}|}{p^{k(n-1)}}.
\end{equation}

For every $m$ there exists $k_0$ such that if $k \ge k_0$ then the fraction inside the limit does not depend on $k$ so the limit is a rational number. The $\infty$-density at a point $y>0$ is defined to be  
\begin{equation}\label{inf denstiy}
Df_\infty^{-1}(y):=\lim_{r \rightarrow 0} 
\frac{\mu_{n}(f^{-1}(B_r(y)))}{\mu(B_r(y))}
\end{equation}
where $B_r(m)$ is the ball of radius $r$ around $y$ and $\mu$ and $\mu_{n}$ the usual 
Lebesgue measures on $\R$ and $\R^{n}$. 
(It is also possible to define the $p$-density in a similar manner to Equation (\ref{inf denstiy}) by replacing the Lebesgue measure with the normalized Haar measures on $\Z_p$ and $\Z_p^{n}$. 
It is easily verified that this definition is equivalent to the one given by Equation (\ref{p denstiy}).) The $\infty$-density is continuous as a function of $y$ and 
\begin{equation}
\int_0^y \mathrm{D}f_\infty^{-1}(t)\mathrm{d}t=\mu_n(f^{-1}[0,y])=\mu_n(B_{\sqrt{y}}(0))=\frac{1}{\sqrt{\mathrm{disc(f)}}}\omega_ny^{\frac{n}{2}}
\end{equation}
where $\mathrm{disc}(f)$ is the discriminant of $f$ and 
\begin{equation}\label{omega}
\omega_n:= \frac{\pi^{\frac{n}{2}}}{\Gamma(\frac{n}{2}+1)}.
\end{equation}

Thus, for $y>0$,
\begin{equation}
\mathrm{D}f_\infty^{-1}(y)=\frac{n}{2\sqrt{\mathrm{disc(f)}}}\omega_ny^{\frac{n}{2}-1}
\end{equation}

Note that the $p$-density $Df_p^{-1}(m)$ and the $\infty$-density $Df_\infty^{-1}(m)$ depend only on the genus of $f$ but not on $f$ itself.

The are finitely many equivalence classes $\mathcal{G}_1,\ldots, \mathcal{G}_r$ of quadratic forms in the genus of $f$. 
If two quadratic forms belong to the same equivalence class $\mathcal{G}_i$ then their integral orthogonal groups have the same size which we denote by 
$|\mathrm{O}(\mathcal{G}_i)|$. The weight of $\mathcal{G}_i$ is defined to be:

\begin{equation}
w_i:=|\mathrm{O}(\mathcal{G}_i)|^{-1}\left/ \sum_{j=1}^r|\mathrm{O}(\mathcal{G}_j)|^{-1}\right..
\end{equation} 

Let $m$ be a positive integer and let $g \in \mathcal{G}_i$. The number of times that $g$ represents $m$ (i.e., the size of $\{v \in \Z^n \mid g(v)=m \}$) depends  only on $\mathcal{G}_i$ and is denoted by $r_i(m)$. 
(Since $f$ and thus also $g$ are positive definite $r_i(m)$ is finite.)

Siegel's mass formula expresses the weighted-average number of integral representation of an integer $m$ by quadratic forms in the genus of $f$
in terms of the primes and infinite densities.

\begin{thmA}[Siegel's mass formula, \cite{Si}]\label{Siegel}  For every $m \ne 0$, 
\begin{equation}
\sum_{i=1}^r w_ir_i(m)=\varepsilon \mathrm{D}f_\infty^{-1}(m)\prod_{p}\mathrm{D}f_p^{-1}(m)
\end{equation} 
where the product is over all the prime numbers and  $\varepsilon=\frac{1}{2}$ if $n=2$ and $\varepsilon=1$ if $n \ge 3$.

\end{thmA}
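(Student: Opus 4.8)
The plan is to realize both sides of the formula as products of local volumes and to use the theory of Tamagawa numbers to pin down the global constant. Let $G=\mathrm{SO}(f)$ be the special orthogonal group of $f$ as an algebraic group over $\Q$, and for $m\ne 0$ let $V_m=\{x:f(x)=m\}$ be the affine quadric. After fixing one solution $x_0\in V_m$, Witt's theorem shows that $G$ acts transitively on $V_m$ over every field, with stabilizer the orthogonal group $H=\mathrm{SO}(f|_{x_0^\perp})$ of the restriction of $f$ to the orthogonal complement of $x_0$. Thus $V_m\cong G/H$ as a homogeneous space, and counting representations of $m$ becomes counting (weighted) integral orbits on $V_m$.

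First I would identify the local factors. The point is that the densities defined in (\ref{p denstiy}) and (\ref{inf denstiy}) are exactly the local volumes of $V_m$ for the measures obtained by disintegrating Haar measure along the gauge form $f$: the Haar-measure reformulation of the $p$-density noted after (\ref{p denstiy}) gives $\mathrm{D}f_p^{-1}(m)=\mathrm{vol}(V_m(\Z_p))$, and the computation preceding the statement gives $\mathrm{D}f_\infty^{-1}(m)=\mathrm{vol}(V_m(\R))$ in the analogous real measure. Hence the right-hand side of the mass formula is, up to the constant $\varepsilon$, the product over all places of the local volumes of $V_m$.

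Next I would match this product to the weighted representation count. By the genus theory, the classes $\mathcal{G}_1,\dots,\mathcal{G}_r$ in the genus of $f$ correspond to the double cosets $G(\Q)\backslash G(\mathbb{A})/G(\hat{\Z})\,G(\R)$, and the weights $w_i$ are proportional to $|\mathrm{O}(\mathcal{G}_i)|^{-1}$. Unwinding these definitions, $\sum_i w_i r_i(m)$ is the representation mass divided by the total mass, which in adelic terms is the ratio of the Tamagawa volume of $H(\Q)\backslash H(\mathbb{A})$ to that of $G(\Q)\backslash G(\mathbb{A})$, multiplied by the product of the local volumes just computed. The factor $\varepsilon$ records this Tamagawa ratio $\tau(H)/\tau(G)$, using Weil's theorem that $\tau(\mathrm{SO})=2$ in dimension $\ge 3$; the value $\tfrac12$ in dimension $2$ reflects the degenerate behavior when $\mathrm{SO}$ is a torus.

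The main obstacle is precisely this last input, equivalently the Siegel--Weil identity that the genus-average theta series is an Eisenstein series. Concretely, one forms $\theta_{\mathrm{gen}}(\tau)=\sum_i w_i\,\theta_{\mathcal{G}_i}(\tau)$, a modular form of weight $n/2$ whose $m$-th Fourier coefficient is $\sum_i w_i r_i(m)$, and must show it equals the weight-$n/2$ Eisenstein series whose $m$-th Fourier coefficient factors as $\varepsilon\,\mathrm{D}f_\infty^{-1}(m)\prod_p\mathrm{D}f_p^{-1}(m)$; comparing coefficients then yields the formula. The nontrivial content is the vanishing of the cuspidal part of $\theta_{\mathrm{gen}}$ (equivalently, the convergence of the Eisenstein series and the Tamagawa number computation). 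For the dimensions relevant to this paper the forms are positive definite of large dimension, well inside the convergent range, so once this identity is granted the remaining steps are the bookkeeping of local densities sketched above.
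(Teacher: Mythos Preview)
The paper does not prove this statement at all: Siegel's mass formula is quoted as a classical theorem with a reference to \cite{Si} and then used as a black box in the proof of Proposition~\ref{prop 41}. So there is no ``paper's own proof'' to compare against.

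Your sketch is the standard Tamagawa-number reinterpretation of the mass formula due to Weil, and as an outline it is broadly correct. A couple of points worth flagging if you want to turn it into an actual proof. First, you pass to $G=\mathrm{SO}(f)$ but the weights $w_i$ in the statement are defined via $|\mathrm{O}(\mathcal{G}_i)|$, so you need to keep track of the factor of $2$ coming from $\mathrm{O}/\mathrm{SO}$ and check that it is absorbed correctly into your $\varepsilon$. Second, the transitivity of $G$ on $V_m$ that Witt gives you is only over fields; over $\Z_p$ the orbit structure can be more complicated, and the identification of $\mathrm{D}f_p^{-1}(m)$ with a single local volume needs a bit more care (one usually works with the smoothness of the quadric away from bad primes and handles the remaining primes directly). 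Finally, as you yourself note, the entire content of the theorem is the input you grant at the end---either Weil's computation $\tau(\mathrm{SO})=2$ or the Siegel--Weil identity---so your write-up is really an explanation of why the mass formula is \emph{equivalent} to that input rather than an independent proof. Siegel's original argument in \cite{Si} is purely analytic and does not go through Tamagawa numbers.
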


{\flushleft{\emph{Proof of Proposition \ref{prop 41}. }}} The first part of the proof is almost identical to the proof of the theorem of Conway and Thompson 
which was stated above (Theorem 9.5 in \cite{MH}).

Let $f$ be some fixed integral even positive definite quadratic forms of discriminant 2 and dimension 41 (which might represents 2). When computing the $p$-density of $f$ we can regard it as a $\Z_p$-integral quadratic form. 
The $p$-density  $\mathrm{D}_p^{-1}(m)$ does not depend on $f_p$ itself but only on the $\Z_p$-equivalence class of $f_p$. If $f$ is equivalent to the sum $g+h$ for some integral $p$-adic quadratic forms $g$ and $h$, then the proof of Lemma 9.4 in Chapter 2 of \cite{MH} implies that 
\begin{equation}
\sup_{m \in \Z}\mathrm{D}f_p^{-1}(m) \le \sup_{m \in \Z}\mathrm{D}g_p^{-1}(m). 
\end{equation}

Let $p \ne 2$ be a prime. It follows form the structure theorem of integral $p$-adic 
quadratic forms (Theorem 3.1 of Chapter 8 in \cite{Ca}) that $f$ (when regraded as an integral $p$-adic form) is equivalent to the sum of 
\begin{equation}
g(x_1,\ldots,x_8):=x_1^2+\cdots+x_8^2
\end{equation}
and some other integral $p$-adic quadratic form. The proof of Lemma 9.4 in Chapter 2 of \cite{MH} implies that for every prime $p\ne 2$
\begin{equation}
\mathrm{D}f_p^{-1}(2)  \le\sup_{m \in \Z}\mathrm{D}g_p^{-1}(m) \le \frac{1-p^{-4}}{1-p^{-3}}
\end{equation}
so 
\begin{equation}\label{bound primes}
\sup_{m \in \Z}\prod_{p\ne 2}\mathrm{D}f_p^{-1}(m)  \le \frac{14\zeta(3)}{15\zeta(4)}\le \frac{11}{10}.
\end{equation}

Assume that $p = 2$. It follows form the structure theorem of integral $2$-adic 
quadratic forms (Lemma 4.1 of Chapter 8 in \cite{Ca}) that $f$ (when regraded as an integral $2$-adic form) is equivalent to the sum of 
\begin{equation}
h(x_1,x_2):=2x_1x_2
\end{equation}
and some other integral $2$-adic quadratic form. It is easy to see that if $m \in \Z$ is even then $\mathrm{D}h^{-1}(m)=2$ while if $m \in \Z$ is odd then $\mathrm{D}h^{-1}(m)=0$. Thus, 
\begin{equation}\label{bound 2}
\sup_{m \in \Z}\mathrm{D}f_2^{-1}(m)  \le 2.
\end{equation}

It is left to bound  $\mathrm{D}f_\infty^{-1}(2)$.
By Stirling's approximation
\begin{equation}
\omega_n\le \frac{1}{\sqrt{n\pi}}\left(\frac{2\pi e}{n}\right)^{\frac{n}{2}}
\end{equation}
 Equations (\ref{inf denstiy}) and (\ref{omega}) imply that for $n=41$
\begin{equation}\label{bound inf}
\mathrm{D}f_\infty^{-1}(2)=\frac{n}{2\sqrt{\mathrm{disc(f)}}}\omega_ny^{\frac{n}{2}-1} \le \sqrt{2^{n-5}\cdot\frac{n}{\pi}\cdot\left(\frac{2\pi e}{n}\right)^{n}}\le \frac{1}{50}.
\end{equation}
Equations (\ref{bound primes}), (\ref{bound 2}) and (\ref{bound inf}) imply
\begin{equation}\label{CT}
\mathrm{D}f_\infty^{-1}(2)\prod_{p}\mathrm{D}f_p^{-1}(2) \le \frac{1}{20}.
\end{equation}

Let $g_1,\ldots,g_k$ be representatives for the equivalence classes of even unimodular positive definite quadratic forms of dimension 32 which do no represents 2. 
King \cite{Ki} proved  that 
\begin{equation}
\sum_{i=1}^k |\mathrm{O}(g_i,\Z)|^{-1}=\frac{10968923}{2}.
\end{equation}
   
Let $e_8$ be the unique (up to equivalence) even unimodular positive definite quadratic form of dimension
8 ($e_8$ corresponds to the root system $\mathrm{E}_8$). For every $1 \le i \le k$ define 
\begin{equation}
f_i(x_1,\ldots,x_{41})=g_i(x_1,\ldots,x_{32})+e_8(x_{33},\ldots,x_{40})+2x_{41}^2.
\end{equation}

Each $f_i$ is an even integral positive definite quadratic form of discriminant 2 and dimension 41. Since there exists a unique genus of even umimodular positive definite quadratic forms of dimension 32 all the $f_i$'s belong to the same genus. Recall that an integral quadratic form is called indecomposable if it not equivalent to the sum of two integral quadratic forms of positive dimensions. For example, the $g_i$'s are indecomposable since every even unimodular quadratic forms of dimension smaller than $24$ represents the number 2.
It is clear that every quadratic forms is equivalent to the sum of indecomposable 
quadratic forms.
A theorem of Eichler (Theorem 6.4 of Chapter 2 in \cite{MH}) states the if $f$ is positive definite, then the number of times each equivalence class of indecomposable quadratic forms appears in the  decomposition of $f$ is uniquely determined. Thus,
if $1 \le i \ne j \le k$, then $f_i$ and $f_j$ are not equivalent. Moreover, $e_8$ is 
indecomposable so the integral  orthogonal group of $f_i$ is isomorphic to the direct product of the integral orthogonal groups of $g_i$, $e_8$ and $2x$. 
Since $|\mathrm{O}(e_8,\Z)|=696729600$
  
\begin{equation}
M_1:=\sum_{i=1}^k |\mathrm{O}(f_i,\Z)|^{-1}=\frac{10968923}{4|\mathrm{O}(e_8,\Z)|} \ge \frac{3}{1000}.
\end{equation}

Complete $f_1,\ldots,f_k$ to a representative set $f_1,\ldots,f_{k+t+s}$ of the equivalence classes in the genus of $f_1$ and assume that all the forms $f_{k+1},\ldots,f_{k+t}$ represent the number 2 and all the forms $f_{k+t+1},\ldots,f_{k+t+s}$ do not represent the number 2. Our goal is to show that $s \ge 3$.

Define 
\begin{equation}
M_2:=\sum_{i=k+1}^{k+t} |\mathrm{O}(f_i,\Z)|^{-1} \ \ \text{and} \ \ M_3:=\sum_{i=k+t+1}^{k+t+s} |\mathrm{O}(f_i,\Z)|^{-1}
\end{equation}

The number of times $e_8$ represents the number 2 is 240.  
Siegel's mass formula together with Equation (\ref{CT}) imply that
\begin{equation}
\frac{242M_1+2M_2}{M_1+M_2+M_3} \le \frac{1}{20}.
\end{equation}
Thus,
\begin{equation}
\frac{7}{10}\le 241M_1 \le \frac{1}{20}M_3 \le \frac{s}{40}
\end{equation}
and  $s \ge 28$. The proof of Proposition 
\ref{prop 41} is now complete.

\end{document}